\newtheorem{theorem}{Theorem}[section]
\newtheorem{proposition}{Proposition}[section]
\newtheorem{definition}{Definition}[section]
\newtheorem{remark}{Remark}[section]
\def\R{\mathbb{R}}
\def\d{\partial}
\def\bp{\begin{proof}}
\def\ep{\end{proof}}
\def\R{{\cal R}}
\def\R{\mathbb{R}}
\def\d{\partial}
\begin{document}

\title[]{The mass in terms of Einstein and Newton}

\author{Levi Lopes de Lima}
\address{Universidade Federal do Cear\'a (UFC),
Departamento de Matem\'{a}tica, Campus do Pici, Av. Humberto Monte, s/n, Bloco 914, 60455-760,
Fortaleza, CE, Brazil.}
\email{levi@mat.ufc.br}
\author{Frederico Gir\~ao}
\address{Universidade Federal do Cear\'a (UFC),
	Departamento de Matem\'{a}tica, Campus do Pici, Av. Humberto Monte, s/n, Bloco 914, 60455-760,
	Fortaleza, CE, Brazil.}
\email{fred@mat.ufc.br}
\author{Amilcar Montalb\'an}
\address{Universidade Federal do Cear\'a (UFC),
	Departamento de Matem\'{a}tica, Campus do Pici, Av. Humberto Monte, s/n, Bloco 914, 60455-760,
	Fortaleza, CE, Brazil.}
\email{amilcarmse@hotmail.com}
\thanks{L. L. de Lima and F. Gir\~ao have been partly supported by 
	FUNCAP/CNPq/PRONEX Grant 00068.01.00/15 and by CNPq grants 311258/2014-0 and 306196/2016-6, respectively. 
	The content of this paper was presented by the first named author at the conference ``Analytical problems in conformal geometry and applications'' (Regensburg, september/2018), funded by the SFB 1085 Higher Invariants Program/DFG. He would like to thank the organizers for the financial support. 
}

\begin{abstract}
	It is  shown that the mass of an asymptotically flat manifold with a noncompact boundary can be computed in terms of limiting surface integrals involving the Einstein tensor of the interior metric and the Newton tensor attached to the second fundamental form of the boundary. This extends to this setting previous results by several authors in the boundaryless case. The method outlined below, which is based on a coordinate-free approach due to Herzlich, also applies to asymptotically hyperbolic manifolds, again with a noncompact boundary, for which a similar notion of mass has been recently considered by Almaraz and the first named author, and both cases will be discussed here.   
\end{abstract}

\maketitle

\section{Introduction}\label{int}

Since its inception in the context of the so-called ADM formulation of General Relativity, the concept of mass, which turns out to be the fundamental numerical invariant of (time-symmetric) asymptotically flat initial data sets, has played a central role both in the original physical setting and in subsequent applications to several areas of Geometric Analysis, particularly in the study of questions related to the existence and multiplicity of solutions of the classical Yamabe problem; see \cite{BM, dLPZ} and the references therein. However, since the mass is costumarily  defined by a limiting process involving integration over larger and larger spheres of a certain quantity depending on the derivatives of the metric up to first order with no manifest physical or geometric meaning, it is not at all obvious from its very definition how it ties to the asymptotic geometry of the given Riemannian metric. 
This lack of an explicit geometric meaning for the mass naturally suggests the problem of expressing  it as an asymptotic surface integral of geometric quantities directly related to the underlying Riemannian structure.  This may be achieved by first establishing the expected relationship between the mass (as well as other asymptotic invariants like the center of mass, etc.) and certain curvature integrals of the given metric for a special class of initial data sets and then appealing to a suitable density theorem making sure that such a class is dense in the space of all apropriate inital data sets satisfying the relevant dominant energy condition in a topology in which the given invariant is continuous. 
Besides confirming the general validity of a geometric formula for the mass already available in  the physics literature \cite{AH, Ch}, this approach has been widely used in many settings and has provided considerable insight on the nature of a large class of such asymptotic invariants \cite{CW,CS,Hu}.

We remark, however, that this method  of ``improving the asymptotics'' is rather technical in nature, as it necessarily involves the previous establishment of a corresponding density theorem in suitable weighted functional spaces. Thus, it is highly desirable to circumvent the use of this piece of hard analysis by providing a direct proof of the geometric formulae for the asymptotic invariants. In the specific case of the ADM mass and the center of mass for asymptotically flat manifolds, this has been accomplished in \cite{MT} through an ingenious (but rather lengthy) computation in local coordinates. Even more recently, by combining the penetrating analysis  on the existence and well-definiteness of these asymptotic invariants due to Michel \cite{Mi} with a Pohozaev-Schoen-type integral formula involving the Ricci and scalar curvatures, Herzlich \cite{H} was able to present an elementary approach to the problem which in particular dispenses local computations and hence provides a manifestly conceptual definition of the invariants in geometric terms. We mention that, as already explained in \cite{H}, this approach also applies to  asymptotically hyperbolic manifolds, thus furnishing a geometric formula for the mass functional attached to such manifolds in \cite{CH}.   

The purpose of this note is to show that the elegant approach in \cite{H} can be adapted to express the mass and the center of mass for the class of  asymptoticaly flat manifolds carrying a noncompact boundary recently studied in \cite{ABdL} in terms of the Einstein tensor of the metric in the interior, the Newton tensor attached to the second fundamental form of the boundary and  suitable asymptotically conformal vector fields (Theorem \ref{flatmain}). The method of proof, which as in \cite{H} combines Michel's analysis with a {\em generalized} Pohozaev-Schoen integral formula (Proposition \ref{poho}) which we believe might have an independent interest, also allows us to express the mass functional associated to an asymptotically hyperbolic manifold with a noncompact boundary as defined in \cite{AdL}, again in terms of the Einstein and Newton tensors (Theorem \ref{masshyp}).

\section{The asymptotically flat case}\label{assflat}

We start by recalling the main definition in \cite{ABdL}. In the following, if $\mathbb R^n_+=\{x\in\mathbb R^n;x_n\geq 0\}$ is the Euclidean half-space endowed with the standard flat metric $\delta$ and $r=\sqrt{x_1^2+\cdots+x_n^2}$ is the standard radial coordinate, then we set $\mathbb R^n_{r_*,+}=\{x\in\mathbb R^n_+;r\geq r_*\}$, where $r_*>0$. Moreover, $(M,g)$ will denote an oriented, smooth manifold of dimension $n\geq 3$ carrying a non-compact boundary $\Sigma$. Also, $R^g$ is the scalar curvature of $g$ and $H^g$ is the mean curvature of $\Sigma$, computed with respect to the {\em inward} unit normal vector field $\nu^g$. In general, we will denote by the same symbol the restriction to $\Sigma$ of a metric on $M$. The next definition isolates the class of manifolds we are interested in; they are modelled at infinity on the Euclidean half-space $\mathbb R^n_{+}$ above.  

\begin{definition}\label{assflatdef}\cite{ABdL}
	We say that $(M,g)$ as above is {\em asymptotically flat} (with a noncompact boundary $\Sigma$) if there exists a compact subset $A\subset M$ and a diffeomorphism $F:M\backslash A\to\mathbb R^n_{r_0,+}$, for some $r_0>0$, so that:
	\begin{enumerate}
		\item as $r\to +\infty$,
		\begin{equation}\label{extracond}
		|g_{ij}-\delta_{ij}|+r|\partial_kg_{ij}|+r^2|\partial_l\partial_kg_{ij}|=O(r^{-\tau}), \quad \tau>\frac{n-2}{2},
		\end{equation}
		where the Euclidean (and hence the radial) coordinates have been transplanted to $M\backslash A$ by means of the chart at infinity $F$ and used to compute the coeffients of $g$ and its derivatives;
		\item both $\int_MR^gd{\rm vol}_M^g$ and $\int_\Sigma H^gd{\rm vol}_\Sigma^g$ are finite. 
		\end{enumerate} 
	\end{definition}

As already observed, in the presence of a chart at infinity $F$ we may identify $M\backslash A$ with $\mathbb R^n_{r_0,+}$. This allows us to define
for $r_0<r<r'$,
$
M_{r,r'}=\{x\in M\backslash A;r\leq |x| \leq r'\}$,
$\Sigma_{r,r'}=\{x\in \partial (M\backslash A);r\leq |x| \leq r'\}
$ and the coordinate $(n-1)$-hemisphere
$S^{n-1}_{r,+}=\{x\in M\backslash A|;|x|=r\}$, 
so that 
$$\d M_{r,r'}=S^{n-1}_{r,+}\cup \Sigma_{r,r'}\cup S^{n-1}_{r',+}.$$
We represent by $\mu^\delta$ the outward unit normal vector field to $S^{n-1}_{r,+}$, computed with respect to the reference metric $\delta$. Also, we consider the coordinate $(n-2)$-sphere $S^{n-2}_r=\partial S_{r,+}^{n-1}\subset \partial(M\backslash K)$, endowed with its outward unit conormal vector field $\vartheta^\delta$, which is tangent to $\Sigma$.
As before we  set $e=g-\delta$, where we have written $\delta=F^*\delta$ for simplicity of notation, and we define the $1$-form
\begin{equation}\label{charge}
\mathbb U^\delta_{e,w}=w({\rm div}^\delta e-d{\rm tr}^\delta e) -\nabla^\delta w\righthalfcup e+{\rm tr}^\delta e\,dw,
\end{equation}
where $w$ is a function on $\mathbb R^n_+$.
In the following we will make use of the universal constants
\[
c_n=\frac{1}{2(n-1)\omega_{n-1}}, \quad d_n=\frac{1}{(2-n)(n-1)\omega_{n-1}},
\]
where $\omega_{n-1}$ is the volume of the unit $(n-1)$-sphere.
Finally, we represent by $\eta^\delta=-\nu^\delta$ the {\rm outward} unit normal vector field along $\Sigma$.
The following result, proved in \cite{ABdL}, provides the most relevant asymptotic invariant for the class of manifolds appearing in Definition \ref{assflatdef}.  

\begin{theorem}\label{flatmass}
	If $(M,g)$ is asymptotically flat  as above then the quantity 
	\begin{equation}\label{flatmass2}
	\mathfrak m_{(M,g)}=c_n\lim_{r\to +\infty}\left[\int_{S^{n-1}_{r,+}} \mathbb U^\delta_{e,{ 1}}(\mu^\delta) d{\rm vol}_{S^{n-1}_{r,+}}^\delta-\int_{S^{n-2}_r}e(\eta^\delta,\vartheta^\delta)d{\rm vol}_{S^{n-2}_r}^\delta\right]
	\end{equation}
	is finite and its value does not depend on which chart at infinity is chosen, where ${1}$ here denotes the function identically equal to $1$.
	\end{theorem}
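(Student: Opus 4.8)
The plan is to prove the two assertions of the statement in turn: finiteness of the limit in \eqref{flatmass2}, and independence of its value on the chart at infinity $F$. Throughout I write $\mathcal{I}(r)$ for the expression in square brackets in \eqref{flatmass2}, so that $\mathfrak m_{(M,g)}=c_n\lim_{r\to+\infty}\mathcal{I}(r)$, and I note that with $w\equiv 1$ the $1$-form \eqref{charge} reduces to $\mathbb U^\delta_{e,1}={\rm div}^\delta e-d\,{\rm tr}^\delta e$. For finiteness, the starting point is the classical identity expressing the scalar curvature $R^g$ as the $\delta$-divergence of $\mathbb U^\delta_{e,1}$ up to a remainder $Q$ which is quadratic in $e$ and $\partial e$; by \eqref{extracond} one has $Q=O(r^{-2\tau-2})$, and this is $\delta$-integrable over $M\setminus A$ since $\tau>(n-2)/2$ forces $2\tau+2>n$. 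First I would integrate this identity over the annulus $M_{r,r'}$ and apply the divergence theorem: since $\partial M_{r,r'}=S^{n-1}_{r',+}\cup\Sigma_{r,r'}\cup S^{n-1}_{r,+}$, the boundary contribution is the difference of the two hemispherical terms occurring in $\mathcal{I}$ together with the integral over $\Sigma_{r,r'}$ of $\mathbb U^\delta_{e,1}(\eta^\delta)$.

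The key step is then to decompose this last integrand, along $\Sigma$, as the sum of a term that reproduces the mean curvature density $H^g\,d{\rm vol}^g_\Sigma$ modulo an $O(r^{-2\tau-1})$ (hence $\Sigma$-integrable) error, and a term that is an exact divergence tangent to $\Sigma$; one more application of Stokes on $\Sigma_{r,r'}$ converts the latter into conormal terms at $S^{n-2}_r$ and $S^{n-2}_{r'}$, which are exactly the corrections $e(\eta^\delta,\vartheta^\delta)$ built into $\mathcal{I}$. Collecting everything I expect to obtain, up to an error that tends to $0$ as $r,r'\to+\infty$,
\begin{equation*}
\mathcal{I}(r')-\mathcal{I}(r)=\int_{M_{r,r'}}R^g\,d{\rm vol}^g_M+\int_{\Sigma_{r,r'}}H^g\,d{\rm vol}^g_\Sigma+o(1),
\end{equation*}
with the precise signs and dimensional factors to be tracked; hypothesis (2) of Definition \ref{assflatdef} then makes the right-hand side tend to $0$, so $\{\mathcal{I}(r)\}$ is Cauchy and the limit exists and is finite.

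For independence of the chart, I would take two charts at infinity $F_1,F_2$ (writing $\mathcal{I}_i(r)$ for $\mathcal{I}(r)$ computed with $F_i$) and consider the transition diffeomorphism $\Phi=F_2\circ F_1^{-1}$ between exterior regions of $\mathbb R^n_+$. Since $F_1^*g$ and $F_2^*g$ both obey \eqref{extracond}, a Bartnik-type argument adapted to the half-space shows that $\Phi$ is asymptotic, to sufficiently high order, to an isometry $A$ of $(\mathbb R^n_+,\delta)$ --- an affine Euclidean motion preserving $\{x_n\geq 0\}$, hence preserving $\Sigma$, $\mu^\delta$, $\eta^\delta$ and $\vartheta^\delta$ --- in the sense that $\Phi-A=O(r^{1-\tau})$ with a matching gain of one derivative. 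The functional $\mathcal{I}(r)$ is exactly invariant under the rigid motion $A$, while the non-rigid contribution of $\Phi-A$, once pulled back, produces only boundary integrals that vanish in the limit; this yields $c_n\lim_r\mathcal{I}_1(r)=c_n\lim_r\mathcal{I}_2(r)$. Alternatively, one may deduce this well-definiteness from the general results of Michel \cite{Mi}, whose framework accommodates the present noncompact-boundary setting.

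I expect the corner bookkeeping along $\Sigma_{r,r'}$ to be the main obstacle: the genuinely new point relative to the boundaryless case is to extract from $\mathbb U^\delta_{e,1}(\eta^\delta)$ precisely the mean-curvature density together with a $\Sigma$-tangential exact form, so that the conormal correction $e(\eta^\delta,\vartheta^\delta)$ at $S^{n-2}_r$ is forced on us and the whole expression closes into a conservation law governed by the pair $(R^g,H^g)$. The asymptotic-rigidity step in the chart-independence part, while standard, is also somewhat technical.
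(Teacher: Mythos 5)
The paper does not actually prove Theorem \ref{flatmass}; it imports it from \cite{ABdL}, and your outline --- the divergence identity for $R^g$ with a quadratic, $O(r^{-2\tau-2})$ remainder, the extraction from $\mathbb U^\delta_{e,1}(\eta^\delta)$ along $\Sigma$ of the mean-curvature density plus a tangential divergence whose Stokes contribution forces the corner correction $e(\eta^\delta,\vartheta^\delta)$, the Cauchy argument from hypothesis (2) of Definition \ref{assflatdef}, and chart-independence via a Bartnik-type rigidity of the transition map or Michel's framework --- is precisely the argument of that reference, as the paper itself reflects in Proposition \ref{altern} with its bulk formula $\int R^h + 2\int H^h$. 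So the proposal is correct and takes essentially the same route as the cited proof.
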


\begin{remark}\label{secorder}
	{\rm 	In order to have the mass well defined it is not required to assume a second order pointwise  control on the metric as in (\ref{extracond}) above; a first order control suffices (see \cite{Mi} for a discussion of this rather subtle point in the boundaryless case). However, for our purposes, this extra assumption is crucial as it not only implies that $H^g=O(r^{-\tau-1})$ but also that both ${\rm Ric}^g$ and $R^g$ are $O(r^{-\tau-2})$. 
	}
\end{remark}

This expression for the mass $\mathfrak m_{(M,g)}$ involves integration over larger and larger (hemi-)spheres  of quantities depending on the derivatives of the metric up to first order with no direct geometric meaning. Thus, as already remarked in the Introduction, it is highly desirable to obtain an expression for this invariant in terms of more fundamentally geometric objects. In the boundaryless case, it is well-known that the mass can be asymptotically  written in terms of the Einstein tensor of $g$, 
\[
E^g={\rm Ric}^g-\frac{R^g}{2}g;
\] 
see \cite{H} and the references therein.
Here we show how the elegant method in \cite{H} can be adapted to express $\mathfrak m_{(M,g)}$ above in terms of 
the Einstein
tensor
$E^g$ and the Newton tensor along the boundary, namely,
\[
J^g=\Pi^g-H^gg, 
\] 
where $\Pi^g$ is the second fundamental form of $\Sigma$, defined with respect to $\nu^g$, the outward unit normal, and  $H^g={\rm tr}_{g|_{\Sigma^g}}\Pi^g$ is the mean curvature. As in the boundaryless case, these tensors should be evaluated on 
the radial vector field $X_0=r\partial_r$.
More precisely, the following result holds. 

\begin{theorem}\label{flatmain} One has 
	\[
	\mathfrak m_{(M,g)}=d_n\lim_{r\to +\infty}\left[\int_{S^{n-1}_{r,+}}E^g(X_0,\mu^g)d{\rm vol}_{S^{n-1}_{r,+}}^g+\int_{S^{n-2}_r}J^g(X_0,\vartheta^g)d{\rm vol}_{S^{n-2}_r}^g \right].
	\]
	\end{theorem}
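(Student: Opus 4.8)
The plan is to compare the two surface-integral expressions for the mass---the ``Hamiltonian'' one from Theorem~\ref{flatmass} and the ``geometric'' one in the statement---by interpolating between them with the aid of a Pohozaev--Schoen-type integral identity (Proposition~\ref{poho}) applied on the compact region $M_{r,r'}$. The first step is to fix a chart at infinity and rewrite the integrand $\mathbb U^\delta_{e,1}(\mu^\delta)$, which is \emph{linear} in $e=g-\delta$ and its first derivatives, as the linearization at $\delta$ of the Einstein tensor contracted with a suitable vector field, up to quadratic error terms that are $O(r^{-2\tau})$ and hence integrate to zero in the limit over $S^{n-1}_{r,+}$ (whose area grows like $r^{n-1}$, while $2\tau>n-2$). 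Concretely, one uses the standard fact that $c_n\,\mathbb U^\delta_{e,w}(\mu^\delta)$, integrated over a coordinate sphere, captures $-d_n$ times the flux of $DE_\delta(e)$ paired against the conformal field, together with Herzlich's observation that for $w=1$ the relevant field is exactly the radial field $X_0=r\partial_r$ (an asymptotically conformal Killing field for $\delta$). The boundary correction term $\int_{S^{n-2}_r}e(\eta^\delta,\vartheta^\delta)$ must similarly be matched, at the linearized level, with $\int_{S^{n-2}_r}J^g(X_0,\vartheta^g)$; here one uses that the Newton tensor $J^g=\Pi^g-H^g g$ linearizes at $\delta$ (where $\Sigma$ is a flat hyperplane, so $\Pi^\delta=0$) in a way that reproduces precisely the conormal components of $e$ appearing in Theorem~\ref{flatmass}.

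The second step is to justify replacing the linearized Einstein and Newton tensors by the \emph{exact} nonlinear ones $E^g$, $J^g$ in the limit. This is where the second-order decay hypothesis (\ref{extracond}) and Remark~\ref{secorder} are essential: the difference $E^g-DE_\delta(e)$ is quadratic in $e,\partial e$ and is $O(r^{-2\tau-2})$, so its flux over $S^{n-1}_{r,+}$ is $O(r^{n-1-2\tau-2})=o(1)$; likewise the difference between $d{\rm vol}^g$ and $d{\rm vol}^\delta$, and between $\mu^g$ and $\mu^\delta$, contributes only lower-order terms because $E^g$ itself is already $O(r^{-\tau-2})$ by Remark~\ref{secorder}. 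The same bookkeeping handles the boundary integrals over $S^{n-2}_r$, using $H^g=O(r^{-\tau-1})$ and $\Pi^g=O(r^{-\tau-1})$. Thus both limits in the statement coincide with their linearized counterparts.

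The third and central step is the integral identity that ties the hemisphere flux of $E^g(X_0,\cdot)$ to the conormal flux of $J^g(X_0,\cdot)$ along $\partial S^{n-1}_{r,+}$, via the bulk terms $R^g$ and $H^g$ which are integrable by Definition~\ref{assflatdef}(2). Applying the generalized Pohozaev--Schoen formula (Proposition~\ref{poho}) on $M_{r_0,r}$ with the field $X_0$: the divergence structure of $E^g$ (namely ${\rm div}^g E^g=0$, the contracted second Bianchi identity) combined with the decomposition of $\nabla X_0$ into its conformal part and a trace-free part produces, after integration by parts, a bulk integral of $R^g$ against the conformal factor plus a boundary integral over $S^{n-1}_{r,+}\cup S^{n-1}_{r_0,+}$ of $E^g(X_0,\mu^g)$ \emph{and} an integral over $\Sigma_{r_0,r}$ involving $E^g(X_0,\nu^g)$; the latter is then converted, using the Gauss--Codazzi relations and the tangential divergence of the Newton tensor $J^g$ along $\Sigma$ (which again has controlled divergence), into the conormal boundary term $\int_{S^{n-2}_r}J^g(X_0,\vartheta^g)$ plus a bulk term in $H^g$. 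Letting $r\to\infty$, the interior contributions at level $r_0$ and the two convergent bulk integrals are exactly what is needed to show that the geometric expression is $r_0$-independent and equals $\mathfrak m_{(M,g)}$; in fact the cleanest route is to show the bracketed quantity in Theorem~\ref{flatmain} has a limit (rather than pinning down the finite-$r$ value) and that this limit agrees with (\ref{flatmass2}) after the linearization comparison of Steps 1--2.

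I expect the main obstacle to be Step 3, specifically the careful treatment of the \emph{boundary-of-the-boundary} term. One must produce the generalized Pohozaev--Schoen identity (Proposition~\ref{poho}) in a form that correctly accounts for the corner $S^{n-2}_r=\partial S^{n-1}_{r,+}$, where the hemisphere meets $\Sigma$; the interplay between the full Einstein tensor on $M$ and the Newton tensor intrinsic to $\Sigma$ is governed by the Gauss equation, and getting the constants $c_n$, $d_n$ and the orientation of the conormals $\vartheta^g$, $\eta^g=-\nu^g$ to line up is delicate. A secondary technical point is verifying that $X_0=r\partial_r$, transplanted via the chart $F$, is tangent to $\Sigma$ to the order needed---true because $\Sigma$ is asymptotic to the coordinate hyperplane $\{x_n=0\}$ and $r\partial_r$ is tangent to that hyperplane---so that the pairing $J^g(X_0,\vartheta^g)$ makes sense and the error in ignoring the normal component of $X_0$ along $\Sigma$ is $O(r^{-\tau})$ and washes out in the limit.
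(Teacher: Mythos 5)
Your Step 3 is structurally the same as the core of the paper's argument: apply Proposition \ref{poho} twice, once on the $n$-dimensional region with $K=E$ (using ${\rm div}\,E=0$ and the conformality of $X_0$, so that only the $\frac{2-n}{2}\int R$ bulk term survives), and once on the $(n-1)$-dimensional annulus in $\Sigma$ with $K=J$ (using the contracted Codazzi identity ${\rm div}\,J={\rm Ric}(\eta,\cdot)$), the two ${\rm Ric}(X_0,\eta)$ contributions cancelling against $E(X_0,\eta)$ because $X_0$ is \emph{exactly} tangent to $\Sigma$ in the chart (not merely asymptotically, since $F$ maps $\Sigma\setminus A$ onto $\{x_n=0\}$ --- so your worry about an $O(r^{-\tau})$ normal component is moot). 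The genuine gap is in Steps 1--2, which carry the entire burden of identifying the resulting limit with $\mathfrak m_{(M,g)}$. You assert as a ``standard fact'' that $c_n\,\mathbb U^\delta_{e,1}(\mu^\delta)$ integrated over the hemisphere reproduces the flux of the linearized Einstein tensor paired with $X_0$, and that the corner term $\int_{S^{n-2}_r}e(\eta^\delta,\vartheta^\delta)$ matches the linearization of $J^g(X_0,\vartheta^g)$. The first of these is the Miao--Tam computation --- a lengthy coordinate calculation that this paper is explicitly designed to avoid --- and the second is not standard at all: the matching of the linearized Newton tensor against the conormal components of $e$ at the corner is precisely the new content of the theorem in the presence of a boundary. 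As written, your proposal reduces the theorem to an unproven linearization identity that is essentially equivalent to it. (There is also a sign slip: you want $+d_n$, not $-d_n$, and your error estimate for the flux of $E^g-DE_\delta(e)$ omits the factor $|X_0|=r$, giving $O(r^{n-2-2\tau})$ rather than $O(r^{n-3-2\tau})$; both are still $o(1)$.)

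The paper closes this gap without ever linearizing $E$ or $J$. The key device is Proposition \ref{altern} (Michel's interpolation trick): with $h=(1-\chi)\delta+\chi g$ on the annulus $M_{r/4,r}$, one has
\[
\mathfrak m_{(M,g)}=c_n\lim_{r\to+\infty}\left[\int_{M_r}R^h\,d{\rm vol}^\delta_{M_r}+2\int_{\Sigma_r}H^h\,d{\rm vol}^\delta_{\Sigma_r}\right],
\]
and Proposition \ref{poho} is then applied to the \emph{exact} tensors $E^h$ and $J^h$ of the interpolated metric. Since $h=\delta$ near the inner hemisphere and inner $(n-2)$-sphere, those boundary contributions vanish identically (no $r_0$-level terms to track), while $h=g$ near the outer boundary, so the outer fluxes are exactly the integrals in the statement; the bulk terms produced by the identity are precisely $\frac{2-n}{2}\int_{M_r}R^h$ and $\frac{2-n}{2}\int_{\Sigma_r}2H^h$ up to $o(1)$, and Proposition \ref{altern} finishes the proof. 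If you want to salvage your route, you must actually prove the linearized identification of Step 1, including the boundary/Newton part; otherwise you should replace Steps 1--2 by the interpolation argument.
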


A simple variation of the procedure leading to the proof of Theorem  \ref{flatmass} also allows us to define the {\em center of mass} of an asymptotically flat manifold $(M,g)$ as in Definition \ref{assflatdef}; see \cite{Mi} for a  discussion of the boundaryless case. 

\begin{theorem}\label{centermass}
	Let $(M,g)$ be an asymptotically flat manifold with $\mathfrak m_{(M,g)}\neq 0$ and assume moreover that  both $\int_MrR^gd{\rm vol}_M^g$ and $\int_\Sigma rH^gd{\rm vol}_\Sigma^g$ are finite, where the asymptotic radial coordinate $r$ has been smoothly extended to the whole of $M$.
	Then for each $\alpha=1,\cdots,n-1$ the quantity
	\[
	\mathfrak c^{\alpha}_{(M,g)}=\frac{c_n}{\mathfrak m_{(M,g)}}\lim_{r\to +\infty}\left[\int_{S^{n-1}_{r,+}}\mathbb U^\delta_{e,x_{\alpha}}(\mu^\delta)d{\rm vol}_{S^{n-1}_{r,+}}^\delta-\int_{S^{n-2}_r}x_\alpha e(\eta^\delta,\vartheta^\delta)d{\rm vol}_{S^{n-2}_r}^\delta\right]
	\]	
	is finite. Moreover, the vector $\mathfrak c_{(M,g)}=(\mathfrak c^1_{(M,g)},\cdots,\mathfrak c^{n-1}_{(M,g)})$ does not depend on the chosen chart at infinity (up to Euclidean rigid motions) and is termed the {\em center of mass} of $(M,g)$.  
\end{theorem}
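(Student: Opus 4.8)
Since, as the paper notes, this is ``a simple variation of the procedure leading to the proof of Theorem~\ref{flatmass}'' (established in \cite{ABdL}), the plan is to mimic that argument with the weight $w=1$ replaced by $w=x_\alpha$, exploiting that $x_\alpha$, like $1$, is affine and hence annihilated by the formal adjoint of the linearized scalar curvature at the flat metric. I would split the proof into two parts: existence of the limit, then invariance under change of chart.

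\emph{Existence of the limit.} Let $DR_\delta$ be the linearization at $\delta$ of $g\mapsto R^g$ and $DR^*_\delta$ its formal $L^2$-adjoint, so that $DR^*_\delta[w]=\mathrm{Hess}^\delta w-(\Delta^\delta w)\,\delta$. The one-form $\mathbb U^\delta_{e,w}$ of (\ref{charge}) is exactly the boundary datum in the associated Green-type identity, which on the collar $M_{r_0,r}$ reads
\[
\int_{M_{r_0,r}}\!\!\big(w\,DR_\delta[e]-\langle e,DR^*_\delta[w]\rangle\big)\,d{\rm vol}^\delta
=\int_{S^{n-1}_{r,+}}\!\!\mathbb U^\delta_{e,w}(\mu^\delta)-\int_{S^{n-2}_r}\!\!w\,e(\eta^\delta,\vartheta^\delta)+\mathcal R_{r_0}+\mathcal B_{r_0,r},
\]
with $\mathcal R_{r_0}$ the fixed contributions at radius $r_0$ and $\mathcal B_{r_0,r}$ the contribution along $\Sigma_{r_0,r}$ built from $e$ and the linearized mean curvature of the coordinate boundary. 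Because $\mathrm{Hess}^\delta x_\alpha=0=\Delta^\delta x_\alpha$, for $w=x_\alpha$ the left-hand side collapses to $\int_{M_{r_0,r}}x_\alpha\,DR_\delta[e]$. I would then substitute $R^g=DR_\delta[e]+\mathfrak Q(e)$, where $\mathfrak Q(e)=O(r^{-2\tau-2})$ is quadratic in $e$ and its first two derivatives, turning the bulk term into $\int x_\alpha R^g-\int x_\alpha\,\mathfrak Q(e)$: the first integral converges by the hypothesis $rR^g\in L^1(M)$, while the second --- only borderline integrable against the weight $x_\alpha\sim r$ under the mere assumption $\tau>\tfrac{n-2}{2}$ --- would be handled exactly as in \cite{Mi}, by integrating by parts inside $\mathfrak Q(e)$ to trade each $e\cdot\partial^2 e$ term for a $(\partial e)^2$ term modulo a divergence that is absorbed into the boundary one-forms. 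The collar contribution $\mathcal B_{r_0,r}$ is dealt with the same way, now invoking $rH^g\in L^1(\Sigma)$, and (after one further integration by parts along $\Sigma$) reproduces precisely the term $-\int_{S^{n-2}_r}x_\alpha\,e(\eta^\delta,\vartheta^\delta)$ of the statement, up to a fixed $r_0$-term and a convergent remainder on $\Sigma$. Letting $r\to+\infty$ then yields convergence of the bracketed expression, and since $\mathfrak m_{(M,g)}\neq 0$ also of $\mathfrak c^\alpha_{(M,g)}$.

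\emph{Invariance under change of chart.} Given two charts at infinity, their transition $\Phi$ is a diffeomorphism between complements of compact sets in $\mathbb R^n_+$ preserving $\partial\mathbb R^n_+$. Arguing as in the boundaryless case treated in \cite{Mi} (and already used for $\mathfrak m_{(M,g)}$ in Theorem~\ref{flatmass}), the decay (\ref{extracond}) with $\tau>\tfrac{n-2}{2}$ forces $\Phi$ to be asymptotic to a Euclidean rigid motion $x\mapsto Ax+b$ preserving the half-space --- so $A\in O(n-1)$ acting on $(x_1,\dots,x_{n-1})$ and fixing $x_n$, and $b\in\partial\mathbb R^n_+$ --- with a remainder decaying fast enough to leave the limits unchanged. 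I would then invoke the naturality of $\mathbb U^\delta_{e,w}$ under isometries of $\delta$ together with the fact that $(A,b)$ permutes the affine span of $\{1,x_1,\dots,x_{n-1}\}$, transforming each $x_\alpha$ affinely. Inserting this into the limit formula, the already chart-invariant scalar $\mathfrak m_{(M,g)}$ factors out and one reads off $\mathfrak c_{(M,g)}\mapsto A\,\mathfrak c_{(M,g)}+b$, i.e.\ $\mathfrak c_{(M,g)}$ is well-defined as a point of $\mathbb R^{n-1}$ modulo Euclidean rigid motions.

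\emph{Main obstacle.} The genuinely delicate point is the borderline integrability encountered in the first step: with only $\tau>\tfrac{n-2}{2}$ at our disposal, the quadratic remainder weighted by $x_\alpha\sim r$ is not absolutely integrable, so one cannot simply apply the divergence theorem and must actually reorganize the charge --- integrating by parts within the quadratic terms and carrying along the extra boundary one-forms, now including their restrictions to the noncompact boundary $\Sigma$ --- before the hypotheses $rR^g\in L^1(M)$ and $rH^g\in L^1(\Sigma)$ can be brought to bear. By comparison, establishing the asymptotic rigidity of the chart transition in the presence of a noncompact boundary, while necessary, should be a routine adaptation of the boundaryless template.
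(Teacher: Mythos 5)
The paper offers no proof of this theorem beyond the one-line remark that it is ``a simple variation of the procedure leading to the proof of Theorem~\ref{flatmass}'' (i.e.\ of the argument in \cite{ABdL} combined with Michel's analysis \cite{Mi}), and your proposal is precisely that variation: replace the static potential $1$ by $x_\alpha\in\mathcal N_\delta$, use $\mathrm{Hess}^\delta x_\alpha=0$ and the Neumann condition to collapse the bulk and collar terms, invoke the hypotheses $rR^g\in L^1(M)$, $rH^g\in L^1(\Sigma)$ for the linear parts and Michel's reorganization of the charge for the borderline quadratic remainder, and derive chart-invariance from the asymptotic rigidity of coordinate changes. Your approach therefore coincides with the paper's, and you correctly single out the borderline integrability of the weighted quadratic terms as the one genuinely delicate point.
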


\begin{remark}\label{regge}
	{\rm It is known that the center of mass $\mathfrak c_{(M,g)}$ is also well defined if instead of the integrability conditions on $R^g$ and $H^g$ in Theorem \ref{centermass} above we assume the corresponding Regge-Teitelboim (RT) evenness conditions at infinity. More precisely,  if $x=(x',x_n)\in M\backslash A$, where $x'=(x_1,\cdots,x_{n-1})$, define the involution $\tau:M\backslash A\to M\backslash A$ by $\tau(x',x_n)=(-x',x_n)$, so that for each function $f$ on $M\backslash A$ we may consider its {\em odd part}
		\[
		f^{{\rm odd}}(x)=\frac{1}{2}\left(f(x)-f(\tau x)\right). 
		\] 
		Then the RT requirement is that
		\[
		|g_{ij}^{\rm odd}|+r |\partial_kg_{ij}^{\rm odd}|=O(r^{-\tau-1}),  \quad (R^g)^{\rm odd}=O(r^{-2\tau-2}), \quad (H^g)^{\rm odd}=O(r^{-2\tau-1});
		\] 
	see \cite{CCS, CW,Hu,H} for discussions in the boundaryless case.}
	\end{remark}
 
In order to write the center of mass in Theorem \ref{centermass} in terms of $E^g$ and $J^g$ we make use of the conformal vector fields $X_{\alpha}=r^2\partial_\alpha-2x_\alpha\sum_ix_i\partial_i$, $\alpha=1,\cdots,n-1$, which obviously are tangent to $\partial \mathbb R^n_+$ (similarly to $X_0$). 
\begin{theorem}\label{centerflat}
	With the notation above, 
	\[
	\mathfrak c^\alpha_{(M,g)}=-\frac{d_n}{2\mathfrak m_{(M,g)}}\lim_{r\to +\infty}\left[\int_{S^{n-1}_{r,+}}E^g(X_\alpha,\mu^g)d{\rm vol}_{S^{n-1}_{r,+}}^g+\int_{S^{n-2}_{r}}J^g(X_\alpha,\vartheta^g)d{\rm vol}_{S^{n-2}_{r}}^g\right].
	\]	
\end{theorem}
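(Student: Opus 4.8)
The proof runs parallel to that of Theorem~\ref{flatmain}, so I will only describe the changes needed to pass from the dilation field $X_0$, whose associated Euclidean potential is $w=1$, to the special conformal fields $X_\alpha$, for which it is $w=x_\alpha$. The starting observation is that $X_0$ and the $X_\alpha$'s together exhaust the conformal Killing fields of $(\mathbb R^n_+,\delta)$ tangent to $\partial\mathbb R^n_+$, and that $\mathcal L_{X_\alpha}\delta=2\varphi_\alpha\,\delta$ with $\varphi_\alpha=\tfrac1n\,{\rm div}^\delta X_\alpha=-2x_\alpha$; thus $x_\alpha=-\tfrac12\varphi_\alpha$, and this proportionality --- in contrast with the relation $w=\varphi_0=1$ of the mass case --- is precisely what turns the constant $d_n$ of Theorem~\ref{flatmain} into $-\tfrac{d_n}{2}$ here.

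First I would apply the generalized Pohozaev--Schoen identity of Proposition~\ref{poho} on $M_{r_0,r}$ with the field $X_\alpha$. Since ${\rm div}^g E^g=0$, its $M$-interior term reduces to $\int_{M_{r_0,r}}\langle E^g,\tfrac12\mathcal L_{X_\alpha}g\rangle\,d{\rm vol}_M^g$; writing $\tfrac12\mathcal L_{X_\alpha}g=\tfrac12\mathcal L_{X_\alpha}e+\varphi_\alpha\,\delta$ on $M\setminus A$, the second summand contributes $\varphi_\alpha\,{\rm tr}^\delta E^g=(n-2)\,x_\alpha R^g$ up to an absolutely integrable error, while the first summand contributes a remainder quadratic in $e$. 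Together with the corresponding $\Sigma$-boundary term of Proposition~\ref{poho}, which in the same way reduces to a multiple of $x_\alpha H^g$ plus a quadratic remainder, this shows --- using the hypotheses $\int_M rR^g\,d{\rm vol}_M^g<\infty$ and $\int_\Sigma rH^g\,d{\rm vol}_\Sigma^g<\infty$ of Theorem~\ref{centermass} to dispose of the curvature terms, together with the control of the quadratic remainders discussed below --- that $\lim_{r\to+\infty}\big[\int_{S^{n-1}_{r,+}}E^g(X_\alpha,\mu^g)\,d{\rm vol}_{S^{n-1}_{r,+}}^g+\int_{S^{n-2}_r}J^g(X_\alpha,\vartheta^g)\,d{\rm vol}_{S^{n-2}_r}^g\big]$ exists.

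Next I would identify this limit with the one defining $\mathfrak c^\alpha_{(M,g)}$. As in Theorem~\ref{flatmain}, Michel's asymptotic analysis shows that $c_n\,\mathbb U^\delta_{e,w}(\mu^\delta)$ coincides, modulo a divergence tangential to $S^{n-1}_{r,+}$ and quadratic-in-$e$ remainders, with the $w$-weighted boundary integrand obtained by linearizing the Einstein operator at $\delta$; substituting $w=-\tfrac12\varphi_\alpha$ and invoking the conformal Killing equation for $X_\alpha$ trades this linearized integrand --- at the cost of exactly the curvature integrals controlled in the previous step --- for $-\tfrac{d_n}{2}E^g(X_\alpha,\mu^g)$, and likewise the conormal term $c_n\,x_\alpha\,e(\eta^\delta,\vartheta^\delta)$ for $\tfrac{d_n}{2}J^g(X_\alpha,\vartheta^g)$. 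Passing to the limit, adding the two contributions, and dividing by $\mathfrak m_{(M,g)}$ yields the stated formula.

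The main obstacle is the decay bookkeeping. Because $X_\alpha$ grows quadratically rather than linearly, every term on $S^{n-1}_{r,+}$ and in the Pohozaev--Schoen interior integrand is a full power of $r$ larger than in the proof of Theorem~\ref{flatmain}; with merely $\tau>\tfrac{n-2}2$ the nominally quadratic remainders are then a half-power of $r$ away from being obviously negligible, so --- unlike in the mass case --- the extra integrability hypotheses of Theorem~\ref{centermass}, or alternatively the Regge--Teitelboim parity conditions of Remark~\ref{regge}, are genuinely needed to handle them; this is precisely the analysis underlying the well-definedness of the coordinate-based expression for $\mathfrak c^\alpha_{(M,g)}$, which I would invoke. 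The other point requiring care is essentially algebraic: checking that the conormal contributions along $S^{n-2}_r$ furnished by Proposition~\ref{poho} reassemble into $J^g(X_\alpha,\vartheta^g)$ with the correct sign and normalization, which depends on the interplay between the Newton tensor of $\Sigma$, the second fundamental form of the coordinate hemispheres $S^{n-1}_{r,+}$, and the tangency of $X_\alpha$ to $\Sigma$ (but not to $S^{n-1}_{r,+}$).
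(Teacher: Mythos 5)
Your opening observation is exactly the paper's: the only new algebraic input relative to Theorem~\ref{flatmain} is the pair of identities (\ref{cruc3}), namely ${\rm div}^\delta X_\alpha=-2nx_\alpha$ and $\widetilde{{\rm div}^\delta_\dagger X_\alpha}=0$, and the factor $-2$ relating ${\rm div}^\delta X_\alpha$ to the static potential $x_\alpha$ is what converts $d_n$ into $-\tfrac{d_n}{2}$. The paper's proof is then literally a rerun of the proof of Theorem~\ref{flatmain}: apply Proposition~\ref{poho} to $E^h$ on the half-annulus $M_{r/4,r}$ and to $J^h$ on $\Sigma_{r/4,r}$, where $h$ is the \emph{interpolated} metric (\ref{herzmet}); use the tangency of $X_\alpha$ to $\Sigma$ to cancel the $\Sigma$-flux of $E^h$ against the Codazzi term ${\rm Ric}^h(X_\alpha,\eta^h)$; and match the resulting bulk integrals of $x_\alpha R^h$ and $x_\alpha H^h$ against the center-of-mass analogue of Proposition~\ref{altern}. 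Your proposal keeps this Pohozaev--Schoen skeleton but discards $h$, working with $g$ directly on $M_{r_0,r}$, and that is where it departs from the paper in a way that costs you.

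Two concrete consequences. First, with $g$ in place of $h$ the flux through the inner hemisphere $S^{n-1}_{r_0,+}$ no longer vanishes (for $h$ it did, because $h=\delta$ there), so your first step produces the desired limit only up to an unidentified constant; to remove it you must compare \emph{integrands}, not integrals. That is precisely your second paragraph, where you assert that $c_n\,\mathbb U^\delta_{e,x_\alpha}(\mu^\delta)$ agrees with $-\tfrac{d_n}{2}E^g(X_\alpha,\mu^g)$ modulo tangential divergences, quadratic remainders and controlled curvature terms. That statement is true, but it is the Miao--Tam pointwise computation that the interpolation device exists to avoid, and as written it is asserted rather than derived; neither Michel's analysis nor the conformal Killing equation hands it to you without the coordinate work. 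Second, the decay bookkeeping: the hypotheses $\int_M rR^g\,d{\rm vol}_M^g<\infty$ and $\int_\Sigma rH^g\,d{\rm vol}_\Sigma^g<\infty$ control the \emph{linear} curvature terms $x_\alpha\,{\rm tr}^g E^g\sim x_\alpha R^g$ and $x_\alpha H^g$; they say nothing about the quadratic remainders such as $\langle E^g,\tfrac12\mathcal L_{X_\alpha}e\rangle=O(r^{-2\tau-1})$, which are borderline for $\tau\in(\tfrac{n-2}{2},\tfrac{n-1}{2}]$ and are handled only by the Regge--Teitelboim parity conditions of Remark~\ref{regge} (or by assuming $\tau>\tfrac{n-1}{2}$). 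Your sentence attributing their control to the integrability hypotheses conflates these two distinct error terms --- though, to be fair, the paper's one-line proof glosses over the same point. If you reinstate the interpolated metric $h$ and the weight-$x_\alpha$ analogue of Proposition~\ref{altern}, both difficulties disappear and your first paragraph already contains essentially the whole proof.
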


\section{The proofs of Theorems \ref{flatmain} and \ref{centerflat}}\label{proofflat}

We  follow \cite{H} closely and for $r>4r_0$  consider a cut-off function $\chi=\chi_r(r')$ on $M\backslash A$ which vanishes for $r'\leq r/2$, equals $1$ for $r'\geq 3r/4$ and satisfies the estimates 
\[
|\nabla \chi|\leq Cr^{-1},\quad |\nabla^2\chi|\leq Cr^{-2}, \quad |\nabla^3\chi|\leq Cr^{-3},
\]
for some $C>0$ independent of $r$. We then define a metric on $M_r:=M_{r/4,r}$ by
\begin{equation}\label{herzmet}
h=(1-\chi)\delta+\chi g,
\end{equation}
which is then extended to the whole of $M$ in the obvious manner. With this notation at hand, the computations in \cite[Section 3]{ABdL} leading to the proof of Theorem \ref{flatmass} above easily imply the following alternative expression for the mass. We remark that this also follows by adapting to this setting Michel's analysis \cite{Mi} in the boundaryless case.

\begin{proposition}\label{altern}
	With the notation above, 
	\begin{equation}\label{altern2}
	\mathfrak m_{(M,g)}=c_n\lim_{r\to +\infty}\left[\int_{M_r}R^h d{\rm vol}_{M_r}^\delta+2\int_{\Sigma_{r}}H^hd{\rm vol}^\delta_{\Sigma_{r}}\right],
	\end{equation}
	where $\Sigma_r:=\Sigma_{r/4,r}$.
	\end{proposition}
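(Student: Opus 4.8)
The plan is to relate the boundary-integral expression \eqref{flatmass2} for $\mathfrak m_{(M,g)}$ to the bulk integrals of scalar and mean curvature of the Herzlich-type interpolating metric $h$ defined in \eqref{herzmet}. The starting observation is that $h=\delta$ on the region $r'\le r/2$ and $h=g$ on $r'\ge 3r/4$, so that over the annular region $M_r=M_{r/4,r}$ one may integrate the linearization of the scalar curvature operator against the constant function $1$. Concretely, I would recall the classical divergence identity
\[
R^h-R^\delta=\mathrm{div}^\delta\big(\mathrm{div}^\delta(h-\delta)-d\,\mathrm{tr}^\delta(h-\delta)\big)+Q(h-\delta),
\]
where $Q$ is quadratic in $h-\delta$ together with its first derivatives, plus the analogous identity for the mean curvature $H^h$ along $\Sigma_r$ involving the tangential divergence of $h-\delta$ and the one-form $(h-\delta)(\eta^\delta,\cdot)$ restricted to the boundary. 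Since $R^\delta=0$ and $H^\delta=0$, integrating over $M_r$ and applying the divergence theorem (carefully accounting for the boundary portion $\Sigma_r$ and the corner $S^{n-2}_{r/4}\cup S^{n-2}_r$) converts the bulk integrals into surface integrals of $\mathbb U^\delta_{h-\delta,1}$ over $S^{n-1}_{r/4,+}\cup S^{n-1}_{r,+}$ and of $(h-\delta)(\eta^\delta,\vartheta^\delta)$ over the corresponding $(n-2)$-spheres.

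The next step is to take the limit $r\to+\infty$ and discard the terms that vanish. On the inner sphere $r'=r/4$ one has $h=g$ there (indeed $r/4\ge 3r/4$ is false — rather one checks the supports carefully; $h=g$ for $r'\ge 3r/4$ and the inner boundary $S^{n-1}_{r/4,+}$ lies where $\chi=0$, so $h=\delta$ there, and that contribution is identically zero). On the outer sphere $r'=r$ one has $h=g$, so $\mathbb U^\delta_{h-\delta,1}=\mathbb U^\delta_{e,1}$, and the surface integral there converges to $\mathfrak m_{(M,g)}/c_n$ by \eqref{flatmass2}. The quadratic remainder terms $Q(e)$ and their boundary analogues are $O(r^{-2\tau-2})$ and $O(r^{-2\tau-1})$ respectively by \eqref{extracond}, hence integrable over $M$ and over $\Sigma$ with vanishing tail; here the second-order control in \eqref{extracond} and Remark \ref{secorder} are what guarantee the needed decay. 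Finally one must argue that replacing the reference volume elements $d{\rm vol}^\delta$ by the intrinsic ones is legitimate in the bulk expression, or rather simply keep $d{\rm vol}^\delta$ throughout as stated in \eqref{altern2}; the point is that $R^h$ and $H^h$ are supported (modulo the already-handled exact part) in the transition region $r/2\le r'\le 3r/4$, where $h$ interpolates, and there $|R^h|\le C(|\nabla^2 e|+|\nabla\chi||\nabla e|+|\nabla^2\chi||e|+\text{quadratic})=O(r^{-\tau-2})$, which integrated over a region of volume $O(r^n)$ contributes $O(r^{n-\tau-2})$ — this does \emph{not} obviously go to zero, so one cannot simply throw the transition region away.

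This last point is where the real work lies, and it is handled exactly as in \cite{H} and in Michel's analysis \cite{Mi}: the transition-region integral of $R^h$ is \emph{not} small pointwise, but the exact (divergence) part of $R^h$ integrates to a difference of fluxes that telescopes, and the genuinely quadratic leftover is the only thing that needs the decay estimate; the linear-in-$e$ but non-exact pieces cancel against the boundary terms. The cleanest route, which I would adopt, is not to estimate $\int_{M_r}R^h$ region-by-region but to first write $R^h$ globally as (exact part) $+$ (quadratic part) using the identity above, integrate the exact part by Stokes to get precisely the right-hand side of \eqref{flatmass2} in the limit, and bound the quadratic part by the integrability hypothesis in Definition \ref{assflatdef}(2) combined with \eqref{extracond}. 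The boundary term $2\int_{\Sigma_r}H^h$ is treated in complete parallel, producing the $-\int_{S^{n-2}_r}e(\eta^\delta,\vartheta^\delta)$ correction with the correct sign and the factor $2$ arising from the normalization of $H$ versus the scalar-curvature linearization. The main obstacle is thus bookkeeping: tracking the corner contributions at $S^{n-2}_{r/4}$ and $S^{n-2}_r$ and verifying that the mean-curvature divergence identity produces exactly the conormal term $e(\eta^\delta,\vartheta^\delta)$ with the constant matching $c_n$ in \eqref{flatmass2}; once the identities are set up correctly, the convergence is immediate from Remark \ref{secorder}.
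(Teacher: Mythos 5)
Your proposal is correct and follows essentially the route the paper itself takes: the paper does not write out a proof but defers to the computations of \cite[Section 3]{ABdL} and to Michel's analysis \cite{Mi}, which is precisely the decomposition of $R^h$ (and $H^h$) into an exact divergence part plus a quadratic remainder, followed by Stokes on $M_r$ and $\Sigma_r$, with the inner boundary and corner contributions vanishing because $h=\delta$ there and the quadratic terms controlled by $\tau>\frac{n-2}{2}$. Your observation that the transition region cannot be discarded by pointwise estimates alone, and must instead be handled through the exact part of the linearization, is exactly the right point.
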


The next ingredient in the proof of Theorem \ref{flatmain} is a Pohozaev-Schoen-type integral identity whose infinitesimal version we present now. A special case of this result appears in \cite[Lemma 2.1]{H}; see also \cite{BdLF} for another variant of this useful identity. We have chosen here to  present this  material in full generality as we believe it might be useful in other contexts as well. Thus, let us     
consider a Riemannian $p$-manifold $(N,\gamma)$ and  take $K=K_{ij}\in \mathcal S^2(N,\gamma)$ to be a symmetric, twice covariant tensor and $Y=Y^i\in\mathcal X(N)$ a vector field, where $i,j=1,\cdots,p$.

\begin{proposition}\label{poho}
	There holds
	\begin{equation}\label{poho2}
	{\rm div}^\gamma (Y\righthalfcup K)=\langle{\rm div}^\gamma K,Y\rangle_\gamma+\langle K,\widetilde{{\rm div}^\gamma_\dagger Y}\rangle_\gamma+\frac{1}{p}{\rm div}^\gamma Y{\rm tr}^\gamma K,
	\end{equation} 
	where ${\rm div}^\gamma_{\dagger}: \mathcal X(N)\to \mathcal S^2(N,\gamma)$ is the $L^2$ adjoint of the divergence map ${\rm div}^\gamma:\mathcal S^2(N,\gamma)\to  \mathcal X(N)$ and the tilde means the trace free part.
	\end{proposition}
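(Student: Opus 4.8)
The plan is to verify the identity \eqref{poho2} by a direct pointwise computation in normal coordinates centered at an arbitrary point of $N$. First I would expand the left-hand side: for $Y \righthalfcup K$ the one-form with components $(Y\righthalfcup K)_j = Y^i K_{ij}$, one has ${\rm div}^\gamma(Y\righthalfcup K) = \nabla^i(Y^j K_{ij}) = (\nabla^i Y^j)K_{ij} + Y^j \nabla^i K_{ij} = \langle \nabla Y, K\rangle_\gamma + \langle {\rm div}^\gamma K, Y\rangle_\gamma$, where $\nabla Y$ denotes the $(0,2)$-tensor $(\nabla Y)_{ij} = \nabla_i Y_j$ obtained by lowering an index. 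So the whole content of the identity is the purely algebraic statement that, when contracted against the \emph{symmetric} tensor $K$, the tensor $\nabla Y$ may be replaced by its symmetrized, trace-adjusted pieces: $\langle \nabla Y, K\rangle_\gamma = \langle K, \widetilde{{\rm div}^\gamma_\dagger Y}\rangle_\gamma + \frac{1}{p}{\rm div}^\gamma Y \,{\rm tr}^\gamma K$.

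The key step is therefore to identify ${\rm div}^\gamma_\dagger Y$ explicitly. Since ${\rm div}^\gamma: \mathcal S^2(N,\gamma) \to \mathcal X(N)$ sends $K \mapsto \nabla^i K_{ij}$, an integration by parts against a compactly supported vector field $Y$ gives $\int_N \langle {\rm div}^\gamma K, Y\rangle_\gamma = -\int_N K_{ij}\nabla^i Y^j = -\int_N \langle K, \tfrac{1}{2}\mathcal L_Y\gamma\rangle_\gamma$, using that $K$ is symmetric so only the symmetric part $\tfrac12(\nabla_i Y_j + \nabla_j Y_i) = \tfrac12(\mathcal L_Y\gamma)_{ij}$ survives the contraction. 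Hence ${\rm div}^\gamma_\dagger Y = -\tfrac{1}{2}\mathcal L_Y\gamma$ as a symmetric $2$-tensor. Its trace-free part is $\widetilde{{\rm div}^\gamma_\dagger Y} = -\tfrac12\mathcal L_Y\gamma + \tfrac{1}{p}({\rm div}^\gamma Y)\gamma$ (note ${\rm tr}^\gamma(\mathcal L_Y\gamma) = 2\,{\rm div}^\gamma Y$). Substituting this into the target algebraic identity, the $\tfrac{1}{p}({\rm div}^\gamma Y)\gamma$ term contracts with $K$ to give exactly $\tfrac{1}{p}{\rm div}^\gamma Y\,{\rm tr}^\gamma K$, cancelling the last term on the right, and we are reduced to $\langle \nabla Y, K\rangle_\gamma = \langle K, -{\rm div}^\gamma_\dagger Y\rangle_\gamma = \langle K, \tfrac12\mathcal L_Y\gamma\rangle_\gamma$, which is precisely the symmetry observation already used above. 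So the identity closes.

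Assembling: ${\rm div}^\gamma(Y\righthalfcup K) = \langle {\rm div}^\gamma K, Y\rangle_\gamma + \langle \nabla Y, K\rangle_\gamma = \langle {\rm div}^\gamma K, Y\rangle_\gamma + \langle K, \widetilde{{\rm div}^\gamma_\dagger Y}\rangle_\gamma + \tfrac{1}{p}{\rm div}^\gamma Y\,{\rm tr}^\gamma K$, which is \eqref{poho2}. The only genuine subtlety — and the step I would be most careful about — is bookkeeping the sign and normalization in the definition of the $L^2$-adjoint ${\rm div}^\gamma_\dagger$, since different sources normalize the divergence and its adjoint differently (a factor of $2$ floats around depending on whether one writes $\mathcal L_Y\gamma$ or its half), and one must make sure the convention chosen is the one under which the stated identity holds verbatim. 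Everything else is a routine contraction of tensors, valid pointwise and requiring no integration or boundary hypotheses, so the proposition holds on any Riemannian manifold without further assumptions.
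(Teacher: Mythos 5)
Your overall strategy is the same as the paper's: compute ${\rm div}^\gamma(Y\righthalfcup K)$ in normal coordinates, use the symmetry of $K$ to replace $\nabla Y$ by the symmetrized tensor $\tfrac12\mathcal L_Y\gamma$, identify this with $\pm{\rm div}^\gamma_\dagger Y$, and split into trace-free and pure-trace parts. However, your final assembly contains a genuine sign error, and it occurs exactly at the step you flagged as delicate. You take ${\rm div}^\gamma_\dagger Y=-\tfrac12\mathcal L_Y\gamma$ (the honest $L^2$ adjoint of $K\mapsto\nabla^iK_{ij}$, obtained by integration by parts). With that convention, ${\rm tr}^\gamma({\rm div}^\gamma_\dagger Y)=-{\rm div}^\gamma Y$, so
\[
\widetilde{{\rm div}^\gamma_\dagger Y}=-\tfrac12\mathcal L_Y\gamma+\tfrac1p({\rm div}^\gamma Y)\gamma ,
\]
and the contraction $\langle K,\widetilde{{\rm div}^\gamma_\dagger Y}\rangle_\gamma$ contributes $+\tfrac1p\,{\rm div}^\gamma Y\,{\rm tr}^\gamma K$. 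This has the \emph{same} sign as the explicit last term of \eqref{poho2}, so the two terms add to $\tfrac2p\,{\rm div}^\gamma Y\,{\rm tr}^\gamma K$ rather than cancelling, and the remaining piece is $-\tfrac12\langle K,\mathcal L_Y\gamma\rangle_\gamma$ instead of the required $+\tfrac12\langle K,\mathcal L_Y\gamma\rangle_\gamma$. Concretely, for a trace-free $K$ your right-hand side evaluates to $\langle{\rm div}^\gamma K,Y\rangle_\gamma-\tfrac12\langle K,\mathcal L_Y\gamma\rangle_\gamma$, which differs from ${\rm div}^\gamma(Y\righthalfcup K)$ by $-\langle K,\mathcal L_Y\gamma\rangle_\gamma$; the identity you "close" is therefore false under your convention.

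The resolution is that the proposition is stated (as in the paper and in Herzlich's Lemma 2.1) with the convention ${\rm div}^\gamma_\dagger Y=+\tfrac12\mathcal L_Y\gamma$, i.e. the Killing (deformation) operator, which is the adjoint of the divergence normalized as $-{\rm tr}\,\nabla$ in the Besse convention. With that sign, ${\rm tr}^\gamma({\rm div}^\gamma_\dagger Y)={\rm div}^\gamma Y$, the trace-free part is $\tfrac12\mathcal L_Y\gamma-\tfrac1p({\rm div}^\gamma Y)\gamma$, its contraction with $K$ produces $-\tfrac1p\,{\rm div}^\gamma Y\,{\rm tr}^\gamma K$, this genuinely cancels against the last term of \eqref{poho2}, and one is left with exactly $\tfrac12\langle K,\mathcal L_Y\gamma\rangle_\gamma$, as needed. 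So the argument is salvageable by a one-line change of convention, but as written the proof does not establish the stated identity; you correctly identified the danger point and then landed on the wrong side of it.
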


\begin{proof}
Computing at the center of a normal coordinate system we have $(Y\righthalfcup K)_j=K_{ij}Y_i$ and hence
\begin{eqnarray*}
{\rm div}^\gamma (Y\righthalfcup K) & = & (K_{ij}Y_i)_{,j}\\
& = & K_{ij,j}Y_i+K_{ij}Y_{i,j}\\
& = & K_{ij,j}Y_i+\frac{1}{2}K_{ij}(Y_{i,j}+Y_{j,i}),
\end{eqnarray*}
where the comma means covariant derivation.
In invariant terms this means that
\[
{\rm div}^\gamma (Y\righthalfcup K)=\langle{\rm div}^\gamma K,Y\rangle_\gamma+\frac{1}{2}\langle K,\mathcal L_Y\gamma\rangle_\gamma, 
\]
where $\mathcal L$ is the Lie derivative.
Since
\[
{\rm div}^\gamma_\dagger Y=\frac{1}{2}\mathcal L_Y\gamma,
\]
we get 
\[
{\rm div}^\gamma (Y\righthalfcup K)=\langle{\rm div}^\gamma K,Y\rangle_\gamma+\langle K,{\rm div}^\gamma_\dagger Y\rangle_\gamma. 
\]
Also, since 
\[
({\rm div}^\gamma_\dagger Y)_{ij}=\frac{1}{2}(Y_{i,j}+Y_{j,i}), 
\]
we have 
\[
{\rm tr}^\gamma{\rm div}^\gamma_\dagger Y={\rm div}^\gamma Y, 
\]
so that 
\[
\widetilde{{\rm div}^\gamma_\dagger Y}={{\rm div}^\gamma_\dagger Y}-\frac{1}{p}({\rm div}^\gamma Y)\gamma ,
\]
and the result follows.
\end{proof}


To proceed with the proof of Theorem \ref{flatmain},
we integrate (\ref{poho2}) over the $n$-dimens\-ional half-annulus $(M_r,h)$, where $h$ is the interpolating metric in (\ref{herzmet}). 
We take $K$ to be 
\[
E^h={\rm Ric}^h-\frac{R^h}{2}h,
\]  
the Einstein tensor of $h$, so that 
\[
{\rm div}^hE^h=0,\quad {\rm tr}^hE^h=\frac{2-n}{2}R^h.
\]
Noticing that 
\[
\int_{S^{n-1}_{r/4,+}}E^h(Y,\eta^h)d{\rm vol}_{S^{n-1}_{r/4,+}}^h=0
\]
because $h=\delta$ there, we obtain
\begin{eqnarray*}
\int_{S^{n-1}_{r,+}}E^g(Y,\mu^g)d{\rm vol}_{S^{n-1}_{r,+}}^g+\int_{\Sigma_{r}}E^h(Y,\eta^h)d{\rm vol}_{\Sigma_r}^h
& = & 
\int_{M_r}\langle E^h,\widetilde{{\rm div}^h_\dagger Y}\rangle_hd{\rm vol}_{M_r}^h\\
& & \quad +\frac{2-n}{2n}\int_{M_r}{\rm div}^hY\cdot R^hd{\rm vol}_{M_r}^h.
\end{eqnarray*}

We now take $Y$ to be the radial vector field $X_0=r\partial_r$ so that 
\begin{equation}\label{cruc1}
{\rm div}^\delta X_0=n,\quad \widetilde{{\rm div}^\delta_\dagger X_0}=0,
\end{equation}
the last identity holding due to the fact that $X_0$ is conformal relatively to $\delta$. As in \cite{H} we note that, as $r\to +\infty$,
\begin{eqnarray*}
\int_{M_r}{\rm div}^hX_0\cdot R^hd{\rm vol}_M^h 
& = & \int_{M_r}{\rm div}^\delta X_0\cdot R^hd{\rm vol}_M^\delta+o(1)\\
& = & n\int_{M_r}R^hd{\rm vol}_M^\delta+o(1),
\end{eqnarray*}
where we used the decay on the scalar curvature described in Remark \ref{secorder}.
Similarly, 
\begin{eqnarray*}
\int_{M_r}\langle E^h,\widetilde{{\rm div}^h_\dagger X_0}\rangle_hd{\rm vol}_M^h
& = & \int_{M_r}\langle E^h,\widetilde{{\rm div}^\delta_\dagger X_0}\rangle_hd{\rm vol}_M^\delta+o(1)\\
& = & o(1),
\end{eqnarray*}
which finally gives
\begin{equation}\label{halfannul}
\int_{S^{n-1}_{r,+}}E^g(X_0,\mu^g)d{\rm vol}_{S^{n-1}_{r,+}}^g+\int_{\Sigma_{r}}E^h(X_0,\eta^h)d{\rm vol}_\Sigma^h=
\frac{2-n}{2}\int_{M_r}R^hd{\rm vol}_{M}^\delta+o(1).
\end{equation}
	
We now integrate (\ref{poho2}) over another configuration, namely, the $(n-1)$-dimens\-ional annulus  $\Sigma_{r}$, whose boundary is $\partial \Sigma_r=S^{n-2}_{r/4}\cup S^{n-2}_{r}$. This time we take $K$ to be $J^h=\Pi^h-H^{h}h$, the Newton tensor of $\Sigma$ with respect to $h$, so that 
\[
{\rm tr}^h J^h=(2-n)H^{h},\quad {\rm div}^h J^h={\rm Ric}^h(\eta,\cdot),
\] 
with the last identity being just the contracted Codazzi equation; see the proof of \cite[Theorem 14]{BdLF}. We also take $Y$ to be $r\partial_r|_{\Sigma_r}$, which we still denote by $X_0$. Hence, 
\begin{equation}\label{cruc2}
{\rm div}^\delta X_0=n-1, \quad \widetilde{{\rm div}^\delta_\dagger X_0}=0,
\end{equation} 
the last identity being true due to the fact that $X_0|_\Sigma$ is conformal relatively to $\delta=\delta|_{\Sigma}$. Using that
\[
\int_{S^{n-2}_{r/4}}J^h(X,\vartheta^h)d{{\rm vol}}_{S^{n-2}_{r/4}}^h=0, 
\]
because $h=\delta$ there, we get
\begin{eqnarray*}
\int_{S_{r}^{n-2}}J^g(X_0,\vartheta^g)d{{\rm vol}}_{S^{n-2}_{r}}^g
& = &  \int_{\Sigma_{r}}{\rm Ric}^h(X_0,\eta^h)d{\rm vol}_{\Sigma_r}^h\\
& & \quad +\int_{\Sigma_{r}}\langle \widetilde{{\rm div}^h_\dagger X_0},J^h\rangle_h d{\rm vol}_{\Sigma_r}^h\\
& & \quad\quad +\frac{2-n}{n-1}\int_{\Sigma_{r}}{\rm div}^h X_0\cdot H^h d{\rm vol}_{\Sigma_r}^h.
\end{eqnarray*}
As before, we make use of the decay on the mean curvature in Remark \ref{secorder} to get
\begin{eqnarray*}
\int_{\Sigma_{r}}{\rm div}^h X_0\cdot H^{h}d{\rm vol}_{\Sigma_r}^h
& = & \int_{\Sigma_{r}}{\rm div}^\delta X_0\cdot H^{h}d{\rm vol}_{\Sigma_r}^\delta+o(1)\\
& = & (n-1) \int_{\Sigma_{r}}H^{h}d{\rm vol}_{\Sigma_r}^\delta + o(1), 	
	\end{eqnarray*}
and similarly,
\begin{eqnarray*}
\int_{\Sigma_{r}}\langle \widetilde{{\rm div}^h_\dagger X_0},J\rangle_h d{\rm vol}_{\Sigma_r}^h
& = & \int_{\Sigma_{r}}\langle \widetilde{{\rm div}^\delta_\dagger X_0},J\rangle_h d{\rm vol}_{\Sigma_r}^\delta+o(1)\\
& = & o(1), 	
	\end{eqnarray*}
so that 
\begin{equation}\label{annul}
\int_{S^{n-2}_{r}}J^g(X_0,\vartheta^g)d{\rm vol}_{S^{n-2}_{r}}^g
 =   \int_{\Sigma_{r}}{\rm Ric}^h(X_0,\eta^h)d{\rm vol}_{\Sigma_r}^h+\frac{2-n}{2}\int_{\Sigma_{r}}2H^{h}d{\rm vol}_{\Sigma_r}^h+ o(1).
\end{equation}
Putting together (\ref{halfannul}) and (\ref{annul}) 
and using that 
\[
E^h(X_0,\eta^h)-{\rm Ric}^h(X_0,\eta^h)=-\frac{R^h}{2}\langle X_0,\eta^h\rangle_h=0
\]
because $X_0$ is tangent to $\Sigma$, we end up with 
\begin{eqnarray*}
d_n\left[	\int_{S^{n-1}_{r,+}}E^g(X_0,\mu^g)d{\rm vol}_{S^{n-1}_{r,+}}^g+\int_{S^{n-2}_{r}}J^g(X_0,\vartheta^g)d{\rm vol}_{S^{n-2}_{r}}^g\right]&&\\
=c_n\left[\int_{M_r}R^hd{\rm vol}_{M_r}^\delta+2\int_{\Sigma_r}H^{h}d{\rm vol}_{\Sigma_r}^\delta\right]+o(1),
\end{eqnarray*}
so Theorem \ref{flatmain} follows from Proposition \ref{altern}.

The proof of Theorem \ref{centerflat} is obtained by essentially the same argument as above, where we now use  that
\begin{equation}\label{cruc3} 
{\rm div}^\delta X_\alpha=-2nx_\alpha,\quad \widetilde{{\rm div}^\delta_\dagger X_\alpha}=0,
\end{equation} 
the last identity holding due to the fact that $X_\alpha$ is conformal relatively to $\delta$.

\begin{remark}\label{alth}
	{\rm Although not explicitly mentioned so far, a crucial ingredient in the proofs of the  theorems above is the fact that the reference space $\mathbb R^n_+$ is {\em static} in the sense that 
	\begin{equation}\label{statdelta}
	\mathcal N_{\delta}:=\left\{w:\mathbb R^n_+\to \mathbb R; (\nabla^\delta)^2w=0, \frac{\partial w}{\partial\eta^\delta}=0\right\}
	\end{equation}
is non-trivial, where $\eta^\delta=(0,0,\cdots,-1)$ is the outward unit normal along $\partial\mathbb R^n_+$. In fact, one easily checks that $\mathcal N_\delta$ is generated by the functions
\[
w_0(x)=1, w_1(x)=x_1,\cdots, w_{n-1}(x)=x_{n-1}. 
\]
Using this terminology, the key point is that  ${\rm div}^\delta X\in \mathcal N_\delta$ whenever $X$ is conformal with respect to $\delta$ and everywhere tangent to $\partial \mathbb R^n_+$; see \cite[Lemma 2.2]{H} for the boundaryless version of this result and compare with the first identity in (\ref{cruc1}) and (\ref{cruc3}) above. In fact, the Hessian operator $(\nabla^\delta)^2$ above equals $D{R}^\delta_\dagger$, the $L^2$ adjoint of the linearization $D{R}^\delta$ of the scalar curvature operator at $\delta$. In general, we say that a Riemannian manifold $(N,\gamma)$, possibly endowed with a noncompact totally geodesic boundary, is {\em static} if 
\begin{equation}\label{statk}
\mathcal N_\gamma:=\left\{w:N\to\mathbb R;D{R}^\gamma_\dagger w=0,\frac{\partial w}{\partial \eta^\gamma}=0\right\}
\end{equation}
is non-trivial, where as always $\eta^\gamma$ is the outward unit normal along the boundary; see \cite{AdL} for an application of this concept in the asymptotically hyperbolic case. Since 
\[
D{R}^\gamma_\dagger w=(\nabla^\gamma)^2w +(\Delta^\gamma w)\gamma-w{\rm Ric}^\gamma,
\]
we see that (\ref{statk}) reduces to (\ref{statdelta}) when $\gamma=\delta$.}
	\end{remark}

\section{The asymptotically hyperbolic case}\label{asymcase}

As already observed, the methods above may be used to express in geometric terms the mass of an asymptotically hyperbolic manifold with a noncompact boundary, an asymptotic invariant studied in detail in \cite{AdL}. Here the reference space is 
the {\em hyperbolic half-space} in dimension $n\geq 3$  given by 
$$
\mathbb H^n_+=\{y\in\R^{1,n}; y_0>0, \langle y,y\rangle_L=-1, y_n\geq 0\},
$$ 
where $\R^{1,n}$ is the Minkowski space with the standard flat metric 
\[
\langle y,y\rangle_L=-y_0^2+y_1^2+\cdots+y_n^2.
\] 
By setting $s=\sqrt{y_1^2+\cdots+y_n^2}$, $\mathbb H^n_+$ inherits the hyperbolic
metric 
$$
b=\frac{ds^2}{1+s^2}+s^2h_0,
$$
where $h_0$ stands for the canonical metric on the unit upper hemisphere $\mathbb S^{n-1}_+$.
Note that $\mathbb H^n_+$
carries a noncompact, totally geodesic boundary, namely, $\partial \mathbb H^n_+=\{y\in\mathbb H^n_+;y_n=0\}$.
Similarly to Remark \ref{alth} above, we set
\[
\mathcal N_b:=\left\{W:\mathbb H^n_+\to\mathbb R;(\nabla^b)^2W=nW,\frac{\partial W}{\partial \eta^b}=0  \right\},
\]
where, as usual, $\eta^b$ is the outward unit normal to $\partial \mathbb H^n_+$. This space is spanned by the functions  $W_a(y)=y_a|_{\mathbb H_+^n}$, $a=0,1,\cdots,n-1$, so $\mathbb H^n_+$ is a {\em static} space; see again Remark \ref{alth}.

We now define the  notion of an asymptotically hyperbolic manifold with a non-compact boundary having $(\mathbb H^n_+,b)$ as a model at infinity; see \cite{AdL} for further details.
For this it is convenient to set $s=\sinh \rho$, so that 
\[
b=d\rho^2+\sinh^2\rho\, h_0
\]
in the coordinates $(\rho,\theta)$, $\theta\in \mathbb S^{n-1}_+$. Notice that in this model $\rho$ is the geodesic distance from the origin $\rho=0$.

\begin{definition}\label{def:as:hyp}\cite{AdL}
	We say that $(M^n,g)$ is {\it{asymptotically hyperbolic}} (with a non-compact boundary $\Sigma$) if there exists $\rho_0>0$, a compact subset $A\subset M$ and a diffeomorphism
	$
	F:M\backslash A\to\mathbb H_{+,\rho_0}^n
	$
	such that: 
	\begin{enumerate}
		\item As $\rho\to+\infty$,  
		\begin{equation}\label{asympthyp}
	|g_{ij}-\delta_{ij}|+|\mathfrak f_kg_{ij}|+ |\mathfrak f_l\mathfrak f_kg_{ij}|=O(e^{-\rho\tau}), \quad \tau>\frac{n}{2},
		\end{equation}
		where $\{\mathfrak f_i\}_{i=1}^n$ is a $b$-orthonormal frame with $\mathfrak f_1=\partial_\rho$ and $g_{ij}=\langle F^*\mathfrak f_i,F^*\mathfrak f_j\rangle_g$;
		\item both $\int_Me^\rho(R^g+n(n-1))d{\rm vol}_M^g$ and $\int_\Sigma e^\rho H^gd{\rm vol}_\Sigma^g$ are finite, where the asymptotical radial coordinate $\rho$ has been smoothly extended to the whole of $M$.
	\end{enumerate}
\end{definition} 

\begin{remark}\label{secorder2}{\rm The same observations as in Remark \ref{secorder} above apply here, so in particular we have that ${\rm Ric}^g+(n-1)g$, $R^g+n(n-1)$ and $H^g$ are $O(e^{-\rho\tau})$.}
	\end{remark}

Regarding this class of manifolds, the next result has been proved in \cite{AdL}. It provides any such manifold with an asymptotic invariant which naturally extends the corresponding notion in the boundaryless case treated in \cite{CH, Wa}. 

\begin{theorem}\label{masshyp}
	If $(M,g)$ is an asymptotically hyperbolic manifold as above then the linear map $\mathfrak M_{(M,g)}:\mathcal N_b\to\mathbb R$ given by
	\begin{equation}\label{masshyp2}
 \mathfrak M_{(M,g)}(W):=c_n\lim_{\rho\to +\infty}\left[\int_{S^{n-1}_{\rho,+}}\mathbb U^b_{e,W}(\mu^b)d{\rm vol}_{S^{n-1}_{\rho,+}}^b-\int_{S^{n-2}_\rho}W e(\eta^b,\vartheta^b)d{\rm vol}_{S^{n-2}_\rho}^b\right],
	\end{equation}
	where here $e=g-b$,
	is well defined and independent of the chosen chart at infinity. Here, 
	$S_{\rho,+}^{n-1}$, etc, are defined just as in the flat case. 
	\end{theorem}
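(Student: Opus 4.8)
The plan is to transplant to the hyperbolic background $(\mathbb H^n_+,b)$ the mechanism that, in the boundaryless case, yields convergence and gauge-invariance of the mass functional (the analogue of Michel's analysis in \cite{Mi} and of the discussion in \cite{H}), while keeping track of the totally geodesic boundary $\partial\mathbb H^n_+$ and of the corner correction $-\int_{S^{n-2}_\rho}W\,e(\eta^b,\vartheta^b)$ built into \eqref{masshyp2}. The key input is a pair of divergence identities for the charge $1$-form $\mathbb U^b_{e,W}$ formed as in \eqref{charge} from $e=g-b$ and a potential $W$. In the interior, combining the pointwise form of the $L^2$-adjointness of $DR^b$ and $DR^b_\dagger$ with the Taylor expansion of the scalar curvature around $b$ (and $R^b=-n(n-1)$) yields
\[
\mathrm{div}^b\!\big(\mathbb U^b_{e,W}\big)=W\big(R^g+n(n-1)\big)-\big\langle e,\,DR^b_\dagger W\big\rangle_b+\mathcal Q(e),
\]
where $DR^b_\dagger$ is the operator of Remark \ref{alth} — so the middle term vanishes precisely for $W\in\mathcal N_b$ — and $\mathcal Q(e)$ is a universal expression quadratic in $e$ and its $b$-covariant derivatives up to second order. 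Along $\partial\mathbb H^n_+$, using $\Pi^b=0$, $H^b=0$ and the Neumann condition $\partial W/\partial\eta^b=0$ satisfied by every $W\in\mathcal N_b$, one obtains the companion identity
\[
\mathbb U^b_{e,W}(\eta^b)=2W H^g-\mathrm{div}^{\Sigma}\!\big(W\,e(\eta^b,\cdot)^{\top}\big)+\mathcal Q_\Sigma(e),
\]
with $\mathcal Q_\Sigma$ again quadratic; the tangential divergence term is exactly the primitive whose boundary values cancel against the corner correction in \eqref{masshyp2}.

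For convergence, write $\mathfrak M_\rho(W)$ for the bracketed quantity in \eqref{masshyp2} evaluated at radius $\rho$, apply the divergence theorem to $\mathbb U^b_{e,W}$ on the half-annulus $M_{\rho,\rho'}$ (whose boundary is $S^{n-1}_{\rho,+}\cup\Sigma_{\rho,\rho'}\cup S^{n-1}_{\rho',+}$), and substitute the two identities above; the corner contributions in \eqref{masshyp2} cancel those produced by $\mathrm{div}^\Sigma(\cdots)$, leaving
\[
\mathfrak M_{\rho'}(W)-\mathfrak M_{\rho}(W)=\int_{M_{\rho,\rho'}}\!W\big(R^g+n(n-1)\big)\,d\mathrm{vol}^b+2\int_{\Sigma_{\rho,\rho'}}\!W H^g\,d\mathrm{vol}^b+\mathcal R(\rho,\rho'),
\]
with $\mathcal R$ the integral of the quadratic remainders. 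Every $W\in\mathcal N_b$ grows like $O(e^\rho)$, and the $b$- and $g$-volume elements are uniformly comparable, so the first two integrals tend to $0$ as $\rho,\rho'\to\infty$ by condition (2) of Definition \ref{def:as:hyp}; for $\mathcal R$, condition (1) gives densities $O(e^{-2\rho\tau})$, and weighting by $W=O(e^\rho)$ and integrating against the volume growth $\sim e^{(n-1)\rho}\,d\rho$ in the interior and $\sim e^{(n-2)\rho}\,d\rho$ along $\Sigma$ produces radial integrands $O(e^{(n-2\tau)\rho})$ and $O(e^{(n-1-2\tau)\rho})$, both integrable at infinity exactly because $\tau>n/2$. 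Hence $\rho\mapsto\mathfrak M_\rho(W)$ is Cauchy, the limit in \eqref{masshyp2} exists and is finite, and linearity in $W$ is immediate from the linearity of $W\mapsto\mathbb U^b_{e,W}$.

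It remains to prove independence of the chart at infinity. Given two admissible charts $F_1,F_2$, the transition $\Phi=F_2\circ F_1^{-1}$ is a diffeomorphism between neighborhoods of infinity in $\mathbb H^n_+$ carrying $\partial\mathbb H^n_+$ to itself, and $\Phi^*b-b\to 0$ since both $F_i^*g$ are $b$-asymptotic to $b$. A boundary-adapted version of the asymptotic rigidity argument underlying \cite{CH} — putting $\Phi$ into a harmonic-type gauge compatible with a Neumann condition along $\partial\mathbb H^n_+$ — shows that, modulo a diffeomorphism decaying fast enough to leave \eqref{masshyp2} unchanged, $\Phi$ is the restriction of an isometry $\mathcal A$ of $(\mathbb H^n_+,b)$ preserving the boundary hyperplane; such isometries act linearly on $\mathcal N_b$, and using the naturality of $\mathbb U^b$ together with the negligibility — already established above — of the two remainder types under a fast-decaying change of gauge, one checks that the functionals computed in the two charts differ only by this linear action on $\mathcal N_b^{\ast}$, which, as in \cite{CH,Wa}, is the precise sense in which \eqref{masshyp2} is chart-independent. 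I expect this last step to be the main obstacle: the asymptotic-isometry analysis must be performed in the presence of a noncompact boundary, which forces one to work in a gauge respecting the boundary hyperplane and to control the error through weighted elliptic estimates for an operator carrying Neumann-type boundary conditions on a half-space — the boundary analogue of the most delicate part of \cite{Mi} and \cite{AdL}. Once the two divergence identities are in hand, by contrast, the convergence step is routine bookkeeping and makes transparent that $\tau>n/2$ is the sharp threshold for it to work.
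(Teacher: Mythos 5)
This theorem is not proved in the paper at all: it is imported verbatim from \cite{AdL} (``the next result has been proved in \cite{AdL}''), so there is no in-paper argument to compare against. That said, your outline coincides with the strategy actually used in \cite{AdL} (and in \cite{ABdL} for the flat analogue, following Michel \cite{Mi}): the interior divergence identity for $\mathbb U^b_{e,W}$ with the $\langle e, DR^b_\dagger W\rangle_b$ term killed by $W\in\mathcal N_b$, the companion boundary identity producing $2WH^g$ plus a tangential divergence whose boundary values are absorbed by the corner term $-\int_{S^{n-2}_\rho}We(\eta^b,\vartheta^b)$, and the decay bookkeeping showing $\tau>n/2$ together with condition (2) of Definition \ref{def:as:hyp} makes the flux Cauchy in $\rho$. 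Your reading of ``chart independence'' as equivariance under the isometries of $(\mathbb H^n_+,b)$ preserving $\partial\mathbb H^n_+$, acting linearly on $\mathcal N_b$, is also the correct interpretation of the statement. The caveat is that the two divergence identities are asserted rather than derived (the boundary one in particular requires the linearization of the mean curvature and the total geodesy of $\partial\mathbb H^n_+$, and is the genuinely new computation relative to the boundaryless case), and the chart-independence step is only a program: the boundary-adapted asymptotic-isometry analysis you flag as the main obstacle is precisely the hard content of \cite{AdL}, so as written the proposal is a faithful roadmap rather than a complete proof.
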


We mention that a positive mass teorem for the mass functional $\mathfrak M$ above has been established in \cite{AdL}
under the assumption that the underlying manifold is spin. 

With this notion at hand, it is not hard to use a simple variation of the arguments in the pevious section to check that the mass functional can be asymptotically expressed in terms of the Einstein and Newton tensors. More precisely, define the modified Einstein tensor
\[
\widehat E^g=E^g-\frac{(n-1)(n-2)}{2}g, 
\] 
and observe that the vector fields $X_a$, $a=0,1,\cdots,n-1$ considered above, when properly transplanted to our hyperbolic model $(\mathbb H^n_+,b)$, are conformal with respect to $b$. Thus, $\widetilde{{\rm div}^b_\dagger X_a}=0$ and a direct computation shows that ${\rm div}^bX_a=nW_a$, in accordance with Remark \ref{alth}. Putting these facts together and proceeding exactly as above we easily obtain the following result.

\begin{theorem}\label{asymhypl}Under these conditions,  for any  $a=0,1,\cdots,n-1$ there holds
	\[
	\mathfrak M_{(M,g)}(W_a)=d_n\lim_{\rho\to +\infty}\left[\int_{S^{n-1}_{\rho,+}}\widehat E^g(X_a,\mu^g)d{\rm vol}_{S^{n-1}_{\rho,+}}^g+\int_{S^{n-2}_\rho}J^g(X_a,\vartheta^g)d{\rm vol}_{S^{n-2}_\rho}^g \right].
	\]
	\end{theorem}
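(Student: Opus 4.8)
The plan is to mimic the proof of Theorem \ref{flatmain} verbatim, substituting the hyperbolic reference geometry for the flat one at each step. First I would record the hyperbolic analogue of Proposition \ref{altern}: using the interpolating metric $h=(1-\chi)b+\chi g$ on a half-annulus $M_\rho:=M_{\rho/4,\rho}$ and the computations of \cite{AdL} leading to Theorem \ref{masshyp}, one has
\begin{equation*}
\mathfrak M_{(M,g)}(W)=c_n\lim_{\rho\to+\infty}\left[\int_{M_\rho}W(R^h+n(n-1))d{\rm vol}_{M_\rho}^b+2\int_{\Sigma_\rho}WH^h d{\rm vol}_{\Sigma_\rho}^b\right],
\end{equation*}
valid for each $W\in\mathcal N_b$; this is the point where Michel-type analysis is invoked, exactly as in the flat case.

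Next I would apply the Pohozaev--Schoen identity of Proposition \ref{poho} on $(M_\rho,h)$ with $K=\widehat E^h=E^h-\tfrac{(n-1)(n-2)}{2}h$ and $Y=X_a$. Since $\widehat E^h$ differs from $E^h$ by a multiple of the metric, ${\rm div}^h\widehat E^h={\rm div}^hE^h=0$ and ${\rm tr}^h\widehat E^h=\tfrac{2-n}{2}R^h-\tfrac{n(n-1)(n-2)}{2}=\tfrac{2-n}{2}(R^h+n(n-1))$, which is precisely the combination appearing in the mass formula above. The inner boundary term at $S^{n-1}_{\rho/4,+}$ vanishes because $h=b$ there and $\widehat E^b=0$. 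Because $X_a$ is conformal with respect to $b$ we have $\widetilde{{\rm div}^b_\dagger X_a}=0$ and ${\rm div}^bX_a=nW_a$, so the error terms $\int_{M_\rho}\langle\widehat E^h,\widetilde{{\rm div}^h_\dagger X_a}\rangle_h$ and the difference between integrating ${\rm div}^hX_a$ versus ${\rm div}^bX_a$ are $o(1)$ by the decay in Remark \ref{secorder2} (here one needs $\widehat E^h={\rm Ric}^h+(n-1)h-\tfrac12(R^h+n(n-1))h$, each factor being $O(e^{-\rho\tau})$, together with the finiteness of $\int_M e^\rho(R^g+n(n-1))$). This yields the hyperbolic analogue of (\ref{halfannul}): $\int_{S^{n-1}_{\rho,+}}\widehat E^g(X_a,\mu^g)+\int_{\Sigma_\rho}\widehat E^h(X_a,\eta^h)=\tfrac{2-n}{2}\int_{M_\rho}W_a(R^h+n(n-1))d{\rm vol}^b_{M_\rho}+o(1)$.

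Then I would run the same argument on the $(n-1)$-dimensional annulus $\Sigma_\rho$ with $K=J^h$ and $Y=X_a|_{\Sigma_\rho}$, using ${\rm tr}^hJ^h=(2-n)H^h$, the contracted Codazzi equation ${\rm div}^hJ^h={\rm Ric}^h(\eta^h,\cdot)$, the vanishing of the inner term at $S^{n-2}_{\rho/4}$, and the conformality of $X_a|_\Sigma$ with respect to $b|_\Sigma$; the decay of $H^g$ again kills the error terms. This gives the analogue of (\ref{annul}). Adding the two identities and using $\widehat E^h(X_a,\eta^h)-{\rm Ric}^h(X_a,\eta^h)=\left(-\tfrac{R^h}{2}-\tfrac{(n-1)(n-2)}{2}\right)\langle X_a,\eta^h\rangle_h=0$ since $X_a$ is tangent to $\Sigma$, I obtain
\begin{equation*}
d_n\left[\int_{S^{n-1}_{\rho,+}}\widehat E^g(X_a,\mu^g)d{\rm vol}^g+\int_{S^{n-2}_\rho}J^g(X_a,\vartheta^g)d{\rm vol}^g\right]=c_n\left[\int_{M_\rho}W_a(R^h+n(n-1))d{\rm vol}^b+2\int_{\Sigma_\rho}W_aH^h d{\rm vol}^b\right]+o(1),
\end{equation*}
and the result follows from the hyperbolic version of Proposition \ref{altern}.

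The main obstacle I anticipate is bookkeeping the $o(1)$ estimates: unlike the flat case, the relevant tensors do not decay to zero but to their background values ($\text{Ric}^b=-(n-1)b$, $R^b=-n(n-1)$), so one must consistently work with the shifted quantities $\text{Ric}^h+(n-1)h$, $R^h+n(n-1)$, $\widehat E^h$ — each genuinely $O(e^{-\rho\tau})$ — and check that the conformal Killing structure of $X_a$ relative to $b$ (not merely relative to $\delta$) is exactly what makes $\widetilde{{\rm div}^b_\dagger X_a}=0$ and ${\rm div}^bX_a=nW_a\in\mathcal N_b$. Verifying that the $X_a$ transplanted from Euclidean space to $(\mathbb H^n_+,b)$ really are conformal with respect to $b$, and that the product $W_a(R^h+n(n-1))$ is integrable with the required decay so that the interpolation $h\to b$ introduces only $o(1)$ errors in the volume forms and in replacing ${\rm div}^h$ by ${\rm div}^b$, is the one genuinely hyperbolic-specific check; everything else is a line-by-line transcription of Section \ref{proofflat}.
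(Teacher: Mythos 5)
Your proposal is correct and follows essentially the same route as the paper, which itself only sketches the argument by noting that $X_a$ is conformal for $b$ with $\widetilde{{\rm div}^b_\dagger X_a}=0$ and ${\rm div}^bX_a=nW_a$ and then instructing the reader to proceed exactly as in the flat case. The details you supply — the hyperbolic analogue of Proposition \ref{altern}, the identities ${\rm div}^h\widehat E^h=0$ and ${\rm tr}^h\widehat E^h=\tfrac{2-n}{2}(R^h+n(n-1))$, the vanishing of $\widehat E^b$, and the cancellation $\widehat E^h(X_a,\eta^h)={\rm Ric}^h(X_a,\eta^h)$ on $\Sigma$ — are exactly the intended ones.
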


It should be mentioned that applications of this result to rigidity questions related to asymptotically hyperbolic Einstein manifolds with a noncompact boundary appear in \cite{AdL}.

\end{document}